  \newcommand{\uplevel}{IEH }
  \renewcommand{\vec}[1]{\boldsymbol{#1}} 
  \newcommand{\tc}{t_{\rm c}}
  \newcommand{\te}{t_{\rm e}}
\begin{document} 
  
  \title{Autonomous-Collaborative Energy Optimization\\ for Interconnected Energy Hubs \\Based on Transactive Control}
  
  %journal
  % \author{Yizhi~Cheng,~\IEEEmembership{Student Member,~IEEE,}
  %         Peichao~Zhang,~\IEEEmembership{Member,~IEEE}
          
  % \thanks{M. Shell was with the Department
  % of Electrical and Computer Engineering, Georgia Institute of Technology, Atlanta,
  % GA, 30332 USA e-mail: (see http://www.michaelshell.org/contact.html).}% <-this % stops a space
  % \thanks{J. Doe and J. Doe are with Anonymous University.}% <-this % stops a space
  % \thanks{Manuscript received April 19, 2005; revised August 26, 2015.}}
  
  % % The paper headers
  % \markboth{Journal of \LaTeX\ Class Files,~Vol.~14, No.~8, August~2015}%
  % {Shell \MakeLowercase{\textit{et al.}}: Bare Demo of IEEEtran.cls for IEEE Journals}
  
  %conference
\author{\IEEEauthorblockN{Yizhi Cheng}
\IEEEauthorblockA{Department of Electrical Engineering\\
Shanghai Jiao Tong University\\
Shanghai, China\\
Email: tabdel@sjtu.edu.cn}
\thanks{This work was supported by the National Key R\&D Program of China (2018YFB0905000)} 
\and
\IEEEauthorblockN{Peichao Zhang}
\IEEEauthorblockA{Department of Electrical Engineering\\
Shanghai Jiao Tong University\\
Shanghai, China\\
Email: pczhang@sjtu.edu.cn}}

  % make the title area
  \maketitle
  
  % As a general rule, do not put math, special symbols or citations
  % in the abstract or keywords.
  \begin{abstract}
  Motivated by various benefits of multi-energy integration, this paper establishes a bi-level framework based on transactive control to realize energy optimization among multiple interconnected energy hubs (EHs). A storage-energy-equivalent method as well as its mathematical proof are provided in the lower level to realize nonlinear constraints relaxation of EH model, while the upper level solves the collaborative problem iteratively in both day-ahead and real-time stages. The proposed method can preserve information privacy and operation authority of each EH while satisfying real-time control requirement, and its effectiveness has also been verified by a simulation case. 
  \end{abstract}
  
  % Note that keywords are not normally used for peerreview papers.
  \begin{IEEEkeywords}
  Energy Hub, Transactive Control, Two-stage Optimization.
  \end{IEEEkeywords}

  \IEEEpeerreviewmaketitle

  \section{Introduction}
  
  \IEEEPARstart Continuous environment deterioration and energy depletion have necessitated the comprehensive utilization of various forms of energy in modern life. 
  % Therefore, endeavors have been made in previous researches to remove industrial barriers among traditional energy suppliers.
   It is believed that the development of relevant technologies would spur the advent of integrated energy service companies that manage several types of energy concurrently. Despite that multi-energy integration might improve overall energy efficiency at lower costs, complexities are also introduced to the system with regards to operation and management. To solve such problems, researchers from ETH Zurich, Switzerland has proposed concept of energy hub (EH) to investigate the energy system as a whole, whose path has been followed by a number of researches\cite{7842813}. Recent years have also witnessed a research re-orientation from energy optimization of a standalone EH to a collaborative optimization among multiple interconnected EHs (IEH)\cite{EN2448}.
  
  Transactive control (TC) developed in recent years is a set of economic and control mechanisms that allows the dynamic balance of supply and demand across the entire electrical infrastructure using value as a key operational parameter\cite{HU2017}. Although many studies have used TC in energy management, according to the authors' knowledge, to date few researches have applied the TC framework to IEH energy optimization. 
  
  This paper assumes an \uplevel that is connected to main grid and natural gas network and can provide energy services for its inferior sectors EHs.
  % Since microgrid can be seen as a specific case of IES, it should(could) be handled without additional consideration.
  At the same time, the \uplevel agent serves as an interface between lower-level EHs and the main grid while responding to dispatching signals from upstream grid.
  This paper then proposes an autonomous-collaborative optimization framework based on TC to coordinate energy management for IEH where incentive and responsive signals are exchanged back and forth between \uplevel agent and EH. The main contributions of this paper include:
  \begin{itemize}
    \item it establishes a bi-level framework to coordinate energy management among interconnected EHs based on TC;
    \item it proposes a storage-energy-equivalent method with its proof to model the lower-level problem in a convex way;
    \item it simplifies dual variables of the upper-level problem and further solves the problem by bisection method iteratively to meet real-time control requirements.
  \end{itemize}
  
  \section{Lower-level: Autonomous optimization based on storage-energy-equivalent method}
  
  \subsection{EH model}
  % The concept of energy hub is put forward as an effective approach for modeling energy input, conversion, storage and output of IES, it is mathematically described using the following equation:
  
  % \begin{equation}
  %   \label{eqn_EHmodel}
  % [\eta]\times[k]\times[P] = [L]
  % \end{equation}
  
  % in which $[P]$ and $[L]$ are the energy supply vector at the EH input port and the load demand vector at the output port, respectively.
  % $[eta]$ represents energy conversion efficiency matrix and $[k]$ comprises energy dispatch factors matrix.
  Fig. \ref{fig_EH_model} shows the structure of an EH consists of combined heat and power(CHP) plant, natural gas furnace(GF), electric energy storage(EES) and thermal energy storage(TES). Its model is:
  \begin{equation}
    \begin{split}
    \label{eqn_IESmodel}
    \left(
      \begin{matrix}
        \eta \rm_{ee} & \eta \rm _{ge}^{CHP} & 0 \\ 
        0 & \eta {\rm _{gth}^{CHP}} & \eta \rm{_{gth}^{GF}}
      \end{matrix}
    \right)
    \left(
      \begin{matrix}
        P_{\mathrm{e},t} \\ G_{\mathrm{g},t}^{\rm CHP} \\ G_{\mathrm{g},t}^{\rm GF}
      \end{matrix}
    \right)
    + 
    \left(
      \begin{matrix}
        P_{\mathrm{dch},t}^{\rm EES} - P_{\mathrm{ch},t}^{\rm EES} \\
        H_{\mathrm{dch},t}^{\rm TES} - H_{\mathrm{ch},t}^{\rm TES}
      \end{matrix}
    \right)\\
    +
    \left(
      \begin{matrix}
        P_t^{\rm RES} \\ 0
      \end{matrix}
    \right)
    -
    \left(
      \begin{matrix}
        P_t^{\rm curt} \\ H_t^{\rm curt}
      \end{matrix}
    \right)
    =
    \left(
      \begin{matrix}
        L_{\mathrm{e},t}^{\rm sl} \\ L_{\mathrm{th},t}^{\rm sl}
      \end{matrix}
    \right)
    +
    \left(
      \begin{matrix}
        L_{\mathrm{e},t} \\ L_{\mathrm{th},t}
      \end{matrix}
    \right),\forall t
    \end{split}
  \end{equation}
  where $P_{\mathrm{e},t}$ denotes the electricity power EH imports from the main gird at time ${t}$, and $P_{\mathrm{e},t}<0$ implies that the EH sells surplus electricity to the gird.
  $G_{\mathrm{g},t}^{\rm CHP}$ and $G_{\mathrm{g},t}^{\rm GF}$ denote natural gas consumed by CHP and GF, respectively.
  $\eta _{ee}$ denotes transmission efficiency.
  $\eta \rm _{ge}^{CHP}$ and $\eta \rm _{gth}^{CHP}$ are the gas-electric and gas-thermal efficiencies of CHP, and $\eta \rm_{gth}^{GF}$ is the efficiency of GF. 
  $P_t^{\rm RES}$ demonstrates the power generated by renewable energies under the maximum power point tracking mode.
  $P_{\mathrm{ch},t}^{\rm EES}$ and $P_{\mathrm{dch},t}^{\rm EES}$ represents the charging and discharging power of EES, respectively, while $H_{\mathrm{ch},t}^{\rm TES}$ and $H_{\mathrm{ch},t}^{\rm TES}$ are the charging and discharging amount of TES.
  $ L_{\mathrm{e},t}^{\rm sl}$ and $ L_{\mathrm{th},t}^{\rm sl}$ denote the shiftable electric load and thermal load, while $L_{\mathrm{e},t}$ and $L_{\mathrm{th},t}$ denote corresponding non-shiftable loads.
  $P_t^{\rm curt}$ and $ H_t^{\rm curt}$ demonstrate the curtailed renewable energies and heat.
  \begin{figure}
    \centering
    \includegraphics[width=0.48\textwidth]{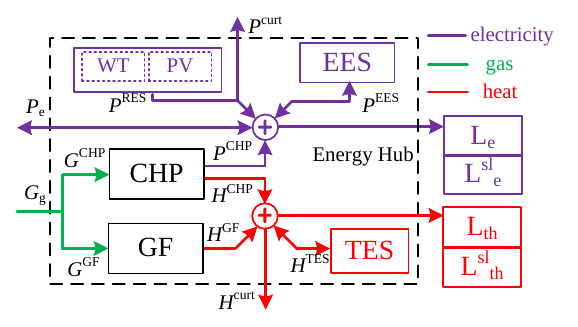}
    \caption{Schematic of a typical integrated energy system}
    \label{fig_EH_model}
    %The \caption tag must come before the \label tag. 
  \end{figure}
  \subsection{Problem formulation}
  The operating cost during period $t$ can be split into two parts: electricity purchasing cost and gas purchasing cost.
  \begin{equation}
    F_t = \mu _{\mathrm{e},t} P_{\mathrm{e},t} +
    \mu _{\mathrm{g},t}
    \left(G_{\mathrm{g},t}^{\rm CHP}
    + G_{\mathrm{g},t}^{\rm GF}\right)
  \end{equation}
  where $\mu _{\mathrm{e},t}$ and $\mu _{\mathrm{g},t}$ denote the utility electricity price and natural gas price at time $t$, respectively.
  
  During every scheduling period, after updating forecasted local renewable energies output and load demand, each EH seeks to minimize the expected costs across the remaining periods in an autonomous manner.

  In addition to power balance constraints\eqref{eqn_IESmodel} and constraints (4-16) in \cite{8245724}, constraints also include limits of charging and discharging power:
  \begin{equation}
    \label{eqn_EES}
    P_{\mathrm{ch},t}^{\rm EES}P_{\mathrm{dch},t}^{\rm EES}=0,\;\;
    H_{\mathrm{ch},t}^{\rm TES}H_{\mathrm{dch},t}^{\rm TES}=0, \forall t
  \end{equation}
  constraints associated with shiftable loads:
  \begin{align}
    \label{eqn_shiftable_load}
    \begin{split}
      L_{\mathrm{e},t}^{\rm sl} \geq 0,\;& L_{\mathrm{th},t}^{\rm sl} \geq 0 ,\forall t \\
      \sum_{t=\tc}^{\te}{L_{\mathrm{e},t}^{\rm sl}}=L_{\rm e}^{\rm sl},\;&
      \sum_{t=\tc}^{\te}{L_{\mathrm{th},t}^{\rm sl}}=L_{\rm th}^{\rm sl}
    \end{split}
  \end{align}
  and upper and lower limits of energy curtailment:
  \begin{equation}
    \label{eqn_curtailment}
    0\leq P_t^{\rm curt} \leq P_t^{\rm RES},\;\; H_t^{\rm curt}>0,\forall t
  \end{equation}
  
  % Other constraints have been detailed in reference\cite{8245724}, including electricity network constraints, generation capacity constraints of CHP units and GF, as well as the upper and lower boundaries of storage’s state-of charge(SOC).
  % 
  
  The autonomous optimization problem can thus be formulated as:
  \begin{align}
    \label{eqn_auto_optimization}
    \begin{split}
      &\min \sum_{t=\tc}^{\te}F_t\\
      \mathrm{s.t.} 
      \eqref{eqn_IESmodel}
      \eqref{eqn_EES}
      \eqref{eqn_shiftable_load}
      \eqref{eqn_curtailment},
      &\text{constraints in \cite{8245724}}
    \end{split}
    \tag{P1}
  \end{align}
  where $\tc$ and $\te$ denote current and end time index. 
  
  \subsection{Storage-energy-equivalent method}
  Some researchers have introduced binary variables\cite{MAJIDI2017157} to remove bi-linear terms in constraint\eqref{eqn_EES}, while this section proposes a method to relax this constraint and turn the autonomous optimization into a convex one. In this case, \eqref{eqn_auto_optimization} then can be represented as:
  \begin{align}
    \label{eqn_auto_optimization_relaxed}
    \begin{split}
      &\min \sum_{t=\tc}^{\te}F_t\\
      \mathrm{s.t.} \;\;
      \eqref{eqn_IESmodel}
      &\eqref{eqn_shiftable_load}
      \eqref{eqn_curtailment},
      \text{constraints in \cite{8245724}}
    \end{split}
    \tag{P2}
  \end{align}
  
  The relationship between optimal solutions of problem \eqref{eqn_auto_optimization} and \eqref{eqn_auto_optimization_relaxed} is discussed below. Without loss of generality, this paper only discusses the mutual exclusiveness of charging/discharging mode of EES, and the same method is also applicable to TES.

  Let the feasible regions of \eqref{eqn_auto_optimization} and \eqref{eqn_auto_optimization_relaxed} be $K_1$, $K_2$, optimal solutions be $ x_1^*$, $x_2^*$, and optimal values be $f(x_1^*)$,$f(x_2^*)$, respectively. Let $P_{\mathrm{ch},t}^{\rm EES*}$ and $P_{\mathrm{dch},t}^{\rm EES*}$ denote the optimal charging/discharging power of EES in $x_2^*$ at time $t$. Then net energy change of EES during this period is:
  \begin{equation}
    \label{eqn_EES_delta_energy}
    \Delta S_t^{\rm EES*}=P_{\mathrm{ch},t}^{\rm EES*}\eta_{\rm ch}^{\rm EES}-
    \frac{P_{\mathrm{dch},t}^{\rm EES*}}{\eta_{\rm dch}^{\rm EES}}
  \end{equation}
  where $\eta_{\rm ch}^{\rm EES}$ and $\eta_{\rm dch}^{\rm EES}$ denote charging and discharging efficiencies of EES, respectively.

  The transformation method first calculates a pair of charging/discharging power value that leads to equivalent energy change during the control time for EES:
  \begin{equation}
    \label{eqn_transform}
    \left(
      \widetilde P_{\mathrm{ch},t}^{\rm EES*}, \widetilde P_{\mathrm{dch},t}^{\rm EES*}
    \right)
    =\begin{cases}
      (\Delta S_t^{\rm EES*}/\eta_{\rm ch}^{\rm EES}, 0),\text{if} \Delta S_t^{\rm EES*}\geq 0 \\
      (0,\; -\Delta S_t^{\rm EES*}\eta_{\rm dch}^{\rm EES}), \text{if} \Delta S_t^{\rm EES*}<0
    \end{cases}
  \end{equation}
  
  Then, for $\forall t$, modify the optimal EES power from $P_{\mathrm{ch},t}^{\rm EES*}$ and $P_{\mathrm{dch},t}^{\rm EES*}$ to $\widetilde P_{\mathrm{ch},t}^{\rm EES*}$ and $\widetilde P_{\mathrm{dch},t}^{\rm EES*}$, respectively. Besides, modify curtailment variable $P_t^{\rm curt*}$ as well to maintain power balance after the transformation:
  \begin{equation}
    \label{eqn_delta_curtailment}
      \widetilde P_t^{\rm curt*} = P_t^{\rm curt*} + \Delta P_{\mathrm{dch},t}^{\rm EES}
  \end{equation}
  where $\Delta P_{\mathrm{dch},t}^{\rm EES}$
  % , difference of net discharge power of EES after the transformation, 
  can be calculated as:
  \begin{equation}
    \label{eqn_EES_delta_power}
        \Delta P_{\mathrm{dch},t}^{\rm EES} =
        \left(
          \widetilde P_{\mathrm{dch},t}^{\rm EES*} - \widetilde P_{\mathrm{ch},t}^{\rm EES*}
          \right) - 
          \left(
            P_{\mathrm{dch},t}^{\rm EES*} - P_{\mathrm{ch},t}^{\rm EES*}
            \right)
  \end{equation}
  
  Let $\widetilde x_2^*$ denotes the new vector after the modification.

  The following theorem provides a sufficient condition for equivalency of these two optimal solutions:
  %and explains the practical implication of the relationship 
  \newtheorem{theorem}{Theorem}
  \begin{theorem}
    \label{thm_normal_mode}
    If no renewable energies need to be curtailed, then $P_{\mathrm{ch},t}^{\rm EES*}P_{\mathrm{dch},t}^{\rm EES*} = 0, \forall t$ holds.
  \end{theorem}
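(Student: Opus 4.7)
The plan is to argue by contradiction: assume that in $x_2^*$ there is some time $t_0$ with both $P_{\mathrm{ch},t_0}^{\rm EES*}>0$ and $P_{\mathrm{dch},t_0}^{\rm EES*}>0$, and construct a strictly cheaper feasible point. The storage-energy-equivalent transformation \eqref{eqn_transform} already supplies a natural candidate at $t_0$, with the pleasant feature that the net stored-energy change $\Delta S_{t_0}^{\rm EES*}$ is preserved by construction, so the EES state-of-charge trajectory and every other storage constraint at times $t\neq t_0$ stay intact. The key twist is that, because the hypothesis states that $P_{t_0}^{\rm curt*}=0$ at the optimum, I would absorb the surplus $\Delta P_{\mathrm{dch},t_0}^{\rm EES}$ \emph{not} by inflating $P_{t_0}^{\rm curt*}$ as in \eqref{eqn_delta_curtailment}, but by decreasing the grid import $P_{\mathrm{e},t_0}$ by $\Delta P_{\mathrm{dch},t_0}^{\rm EES}/\eta_{\rm ee}$, thereby using the round-trip loss that is being avoided to directly reduce the electricity bill.

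The execution then splits into two short steps. First, I would verify feasibility of the perturbed point: $\widetilde P_{\mathrm{ch},t_0}^{\rm EES*}$ and $\widetilde P_{\mathrm{dch},t_0}^{\rm EES*}$ are non-negative by \eqref{eqn_transform}; the SoC path is unchanged; curtailment stays at $0$; and the electric power balance row of \eqref{eqn_IESmodel} at $t_0$ closes exactly because $\eta_{\rm ee}$ times the chosen $P_{\mathrm{e},t_0}$ decrement cancels the increment $\Delta P_{\mathrm{dch},t_0}^{\rm EES}$ in the net EES source term. Second, a short algebraic expansion of \eqref{eqn_EES_delta_power} using \eqref{eqn_EES_delta_energy}--\eqref{eqn_transform} should give $\Delta P_{\mathrm{dch},t_0}^{\rm EES}=P_{\mathrm{dch},t_0}^{\rm EES*}\bigl(1/(\eta_{\rm ch}^{\rm EES}\eta_{\rm dch}^{\rm EES})-1\bigr)$ in the charging branch and $\Delta P_{\mathrm{dch},t_0}^{\rm EES}=P_{\mathrm{ch},t_0}^{\rm EES*}\bigl(1-\eta_{\rm ch}^{\rm EES}\eta_{\rm dch}^{\rm EES}\bigr)$ in the discharging branch; both are strictly positive because the efficiencies lie in $(0,1)$ and we assumed both $P_{\mathrm{ch},t_0}^{\rm EES*}$ and $P_{\mathrm{dch},t_0}^{\rm EES*}$ are strictly positive. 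Hence $\sum_t F_t$ drops by $\mu_{\mathrm{e},t_0}\,\Delta P_{\mathrm{dch},t_0}^{\rm EES}/\eta_{\rm ee}>0$, contradicting the optimality of $x_2^*$ and closing the argument.

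The main obstacle I anticipate is the admissibility of the $P_{\mathrm{e},t_0}$ decrement: one has to be sure that no lower bound on $P_{\mathrm{e},t}$ inherited from \cite{8245724} is active at the optimum. Since $P_{\mathrm{e},t}$ is signed (negative values correspond to selling surplus back to the grid) and the no-curtailment hypothesis rules out the very scenario in which the system is already saturating its capacity to export renewables, I would expect this to go through, but it is the step that deserves a careful case check of the bounds. A secondary subtlety is to note that only the $t_0$-slice is perturbed, so shiftable-load constraints \eqref{eqn_shiftable_load} and all inter-temporal coupling through $\Delta S_t^{\rm EES*}$ remain automatically satisfied; TES is treated by the identical argument, as the statement of the theorem allows.
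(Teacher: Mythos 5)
Your proposal is correct and follows essentially the same route as the paper's proof: a contradiction argument in which the simultaneous charge/discharge at $t_0$ is replaced via \eqref{eqn_transform}, the strictly positive surplus $\Delta P_{\mathrm{dch},t_0}^{\rm EES}$ is computed exactly as in \eqref{eqn_EES_delta_power_geq0}, and the absence of curtailment is used to convert that surplus into a strictly lower electricity purchase, contradicting optimality. You are in fact somewhat more explicit than the paper, which only states that the agent ``could purchase less electricity'' without spelling out the $P_{\mathrm{e},t_0}$ adjustment, the $\eta_{\rm ee}$ factor, or the second branch of the algebra.
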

  \begin{proof}
    To prove it by contradiction, suppose that $\exists t\in \left[\tc,\te\right]$, such that $P_{\mathrm{ch},t}^{\rm EES*} > 0,\;P_{\mathrm{dch},t}^{\rm EES*} > 0$. Without loss of generality, assume that $\Delta S_t^{\rm EES*} \geq 0$, and similar proof can be derived when $\Delta S_t^{\rm EES*} < 0$. According to \eqref{eqn_transform}, the net discharge power change \eqref{eqn_EES_delta_power} can be simplified as:
    \begin{equation}
      \label{eqn_EES_delta_power_geq0}
      \Delta P_{\mathrm{dch},t}^{\rm EES} =\left(
        \frac{1}{\eta_{\rm dch}^{\rm EES}\eta_{\rm ch}^{\rm EES}}-1
      \right)
      P_{\mathrm{dch},t}^{\rm EES*} > 0
    \end{equation}
  which means that compared with $x_2^*$, EES in $\widetilde x_2^*$ consumes less power. Since there is no renewable energies curtailment, according to \eqref{eqn_IESmodel}, the \uplevel agent could purchase less electricity from the main grid to supply loads, thus reducing the overall costs, which contradicts with the fact that $x_2^*$ is defined as an optimal solution of \eqref{eqn_auto_optimization_relaxed}.
  \end{proof}
  
  According to Theorem \ref{thm_normal_mode}, without energy curtailment the model has no incentive to charge and discharge simultaneously\cite{BECK2016331}. Actually, references\cite{BECK2016331, 8486723} have conducted similar relaxation steps under the precondition that this sufficient condition is always established. However, when renewable energies are abundant compared with the local load level, surplus power should be curtailed and this precondition is not established. In this case, an augmented sufficient condition is given in the following theorem:
  \begin{theorem}
    \label{thm_extreme_mode}
    If optimal solution of \eqref{eqn_auto_optimization_relaxed} $x_2^*$ satisfies:
    \begin{equation}
      \label{eqn_condition_constraint}
      \frac{P_t^{\rm RES} - P_t^{\rm curt*}}{1-\eta \rm _{ch}^{EES}\eta \rm _{dch}^{EES}} \geq \min (P_{\mathrm{ch},t}^{\rm EES*},\frac{P_{\mathrm{dch},t}^{\rm EES*}}{\eta \rm _{ch}^{EES}\eta \rm _{dch}^{EES}})
    \end{equation}
    then the new vector $\widetilde x_2^*$ is an optimal solution of \eqref{eqn_auto_optimization}.
  \end{theorem}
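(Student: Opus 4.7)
The plan is to prove two facts: $\widetilde{x}_2^* \in K_1$, and $f(\widetilde{x}_2^*)=f(x_2^*)$. Since $K_1\subseteq K_2$ already gives $f(x_2^*)\leq f(x_1^*)$, these two facts together with $\widetilde{x}_2^*\in K_1$ force $f(\widetilde{x}_2^*)=f(x_1^*)$, so $\widetilde{x}_2^*$ is optimal for \eqref{eqn_auto_optimization}.

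First I would verify that $\widetilde{x}_2^*$ meets every constraint of \eqref{eqn_auto_optimization}. The mutual-exclusion requirement \eqref{eqn_EES} is satisfied by construction, because \eqref{eqn_transform} sets one of $\widetilde{P}_{\mathrm{ch},t}^{\rm EES*}, \widetilde{P}_{\mathrm{dch},t}^{\rm EES*}$ to zero in each case. The shiftable-load conditions \eqref{eqn_shiftable_load} are inherited from $x_2^*$ unchanged. The state-of-charge trajectory and the storage rate/capacity limits imported from \cite{8245724} are preserved because \eqref{eqn_transform} keeps $\Delta S_t^{\rm EES*}$ exactly the same, while a direct case-check of \eqref{eqn_transform} also gives $\widetilde{P}_{\mathrm{ch},t}^{\rm EES*}\leq P_{\mathrm{ch},t}^{\rm EES*}$ and $\widetilde{P}_{\mathrm{dch},t}^{\rm EES*}\leq P_{\mathrm{dch},t}^{\rm EES*}$. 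Power balance \eqref{eqn_IESmodel} is preserved because \eqref{eqn_delta_curtailment} is built so that $(\widetilde{P}_{\mathrm{dch},t}^{\rm EES*}-\widetilde{P}_{\mathrm{ch},t}^{\rm EES*})-\widetilde{P}_t^{\rm curt*}$ equals its pre-transformation value, leaving $P_{\mathrm{e},t}$, $G_{\mathrm{g},t}^{\rm CHP}$, $G_{\mathrm{g},t}^{\rm GF}$ untouched.

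The main obstacle, and the point at which the hypothesis \eqref{eqn_condition_constraint} has to be used, is verifying the curtailment bound \eqref{eqn_curtailment}, namely $0\leq \widetilde{P}_t^{\rm curt*}\leq P_t^{\rm RES}$. For the lower bound I would substitute \eqref{eqn_transform} into \eqref{eqn_EES_delta_power} in both cases: when $\Delta S_t^{\rm EES*}\geq 0$ this yields the expression already appearing in \eqref{eqn_EES_delta_power_geq0}, and when $\Delta S_t^{\rm EES*}<0$ it yields $(1-\eta_{\rm ch}^{\rm EES}\eta_{\rm dch}^{\rm EES})P_{\mathrm{ch},t}^{\rm EES*}$, both nonnegative since the round-trip efficiency is at most one, hence $\widetilde{P}_t^{\rm curt*}\geq P_t^{\rm curt*}\geq 0$. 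The upper bound reduces to $\Delta P_{\mathrm{dch},t}^{\rm EES}\leq P_t^{\rm RES}-P_t^{\rm curt*}$, which after rearrangement is precisely \eqref{eqn_condition_constraint}; the $\min$ in \eqref{eqn_condition_constraint} captures both subcases simultaneously because $\Delta S_t^{\rm EES*}\geq 0$ is equivalent to $P_{\mathrm{dch},t}^{\rm EES*}/(\eta_{\rm ch}^{\rm EES}\eta_{\rm dch}^{\rm EES})\leq P_{\mathrm{ch},t}^{\rm EES*}$.

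Finally I would observe that $F_t$ depends only on $P_{\mathrm{e},t}$, $G_{\mathrm{g},t}^{\rm CHP}$ and $G_{\mathrm{g},t}^{\rm GF}$, none of which are altered by the transformation, so $f(\widetilde{x}_2^*)=f(x_2^*)$. Combined with feasibility established above and the relaxation inequality $f(x_2^*)\leq f(x_1^*)$, this chain pins down $f(\widetilde{x}_2^*)=f(x_1^*)$, proving that $\widetilde{x}_2^*$ is an optimal solution of \eqref{eqn_auto_optimization}.
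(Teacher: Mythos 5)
Your proposal is correct and follows essentially the same route as the paper: verify that $\widetilde x_2^*$ is feasible for \eqref{eqn_auto_optimization} (with condition \eqref{eqn_condition_constraint} used exactly once, for the upper curtailment bound), note that the transformation leaves the objective unchanged, and close with the standard relaxation inequality $f(x_2^*)\leq f(x_1^*)$. If anything, you make explicit the detail the paper leaves implicit --- that the $\min$ in \eqref{eqn_condition_constraint} always selects the branch ($P_{\mathrm{ch},t}^{\rm EES*}$ or $P_{\mathrm{dch},t}^{\rm EES*}/(\eta_{\rm ch}^{\rm EES}\eta_{\rm dch}^{\rm EES})$) actually needed for the sign of $\Delta S_t^{\rm EES*}$ at hand --- and your single unified argument subsumes the paper's separate ``Condition 1'' ($x_2^*\in K_1$) as the degenerate case $\Delta P_{\mathrm{dch},t}^{\rm EES}=0$.
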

  \begin{proof}
  Condition 1: When $x_2^* \in K_1$. First, since \eqref{eqn_EES} is satisfied and the right-hand-side of \eqref{eqn_condition_constraint} equals zero, condition \eqref{eqn_condition_constraint} is always met. Second, on the one hand, because $x_2^*$ and $x_1^*$ denote the feasible and optimal solution of \eqref{eqn_auto_optimization} respectively, it can be derived that $f(x_1^*) \leq f(x_2^*)$. On the other hand, since $K_1 \subset K_2$, then $f(x_1^*) \geq f(x_2^*)$. Therefore, $f(x_1^*)=f(x_2^*)$ and $x_2^*$ is also an optimal solution of \eqref{eqn_auto_optimization}. At last, it is obvious that $x_2^* = \widetilde x_2^*$. To sum up, $\widetilde x_2^*$ is an optimal solution of \eqref{eqn_auto_optimization}.
  
  Condition 2: When $x_2^* \notin K_1$, which means that $\exists t\in [\tc,\te]$, such that $P_{\mathrm{ch},t}^{\rm EES*}P_{\mathrm{dch},t}^{\rm EES*} > 0$. 
  % To prove that $\widetilde x_2^*$ is an optimal solution of \eqref{eqn_auto_optimization}, let's first prove that it is a feasible solution. 
  Let's first verify that  $\widetilde x_2^*$ still satisfy constraints associated with modified variables,i.e. $P_{\mathrm{ch},t}^{\rm EES*}, P_{\mathrm{dch},t}^{\rm EES*}$ and $P_t^{\rm curt*}$. It is obvious that $\widetilde x_2^*$ satisfies \eqref{eqn_IESmodel}\eqref{eqn_EES} already. Since state-of-charge (SOC) change of EES stays unchanged, all SOC-related constraints still hold. Besides, upper and lower limits of EES charging/discharging power are satisfied according to \eqref{eqn_transform}. At last, substitute  \eqref{eqn_EES_delta_power_geq0}, \eqref{eqn_condition_constraint} into \eqref{eqn_delta_curtailment}, and it can be proved that constraints\eqref{eqn_curtailment} are satisfied.
  % \begin{enumerate} 
  % \item The maximum and minimum charging/discharging power constraint of EES. According to \eqref{eqn_transform}
  % \begin{align}
  %   \begin{split}
  %   0 \leq& \Delta S_t^{\rm EES*}/\eta_{\rm ch}^{\rm EES} 
  %   = \widetilde P_{\mathrm{ch},t}^{\rm EES*} \\
  %   =& P_{\mathrm{ch},t}^{\rm EES*}-
  %   \frac{P_{\mathrm{dch},t}^{\rm EES*}}{\eta \rm_{dch}^{EES}}
  %   < P_{\mathrm{ch},t}^{\rm EES*}
  %   \leq P\rm _{EES,ch}^{max}
  %   \end{split}
  % \end{align} 
  % \begin{align}
  %   \begin{split}
  %   0 \leq &
  %   P_t^{\rm curt*} <
  %   \widetilde P_t^{\rm curt} \\=
  %   &P_t^{\rm curt*} + 
  %   \left(
  %       \frac{1}{\eta_{\rm dch}^{\rm EES}\eta_{\rm ch}^{\rm EES}}-1
  %     \right)
  %     P_{\mathrm{dch},t}^{\rm EES*}
  %   \leq P_t^{\rm RES}
  %   \end{split}
  % \end{align}
  % \end{enumerate}
  %
  Therefore, all constraints of \eqref{eqn_auto_optimization} are met for $\widetilde x_2^*$, and $\widetilde x_2^*$ is a feasible solution of \eqref{eqn_auto_optimization}. Since $f(x_2^*) = f(\widetilde x_2^*)$, hereafter we apply conclusion of condition 1, and it can be finally derived that $f(x_1^*) = f(\widetilde x_2^*)$. Thus, $\widetilde x_2^*$ is an optimal solution of \eqref{eqn_auto_optimization}.
  \end{proof}
  It should be pointed out that 
  % when \eqref{eqn_condition_constraint} is unsatisfied, on the one hand, out of safety and stability considerations, the \uplevel should no longer regard cost as the only decision index in energy management optimization. On the other hand, 
  extreme circumstance when \eqref{eqn_condition_constraint} is unsatisfied never occurs due to the optimal planning procedure of EH in practical operations. This fact is also verified in the simulation case. As a consequence, by solving \eqref{eqn_auto_optimization_relaxed}, an optimal solution of problem \eqref{eqn_auto_optimization} can be obtained. Therefore, in the subsequent model, each EH autonomously optimizes according to \eqref{eqn_auto_optimization_relaxed}. 
  
  \section{Upper-level: Collaborative optimization based on transactive control}
  \begin{figure*}
    \centering
    \includegraphics[width=0.9\textwidth]{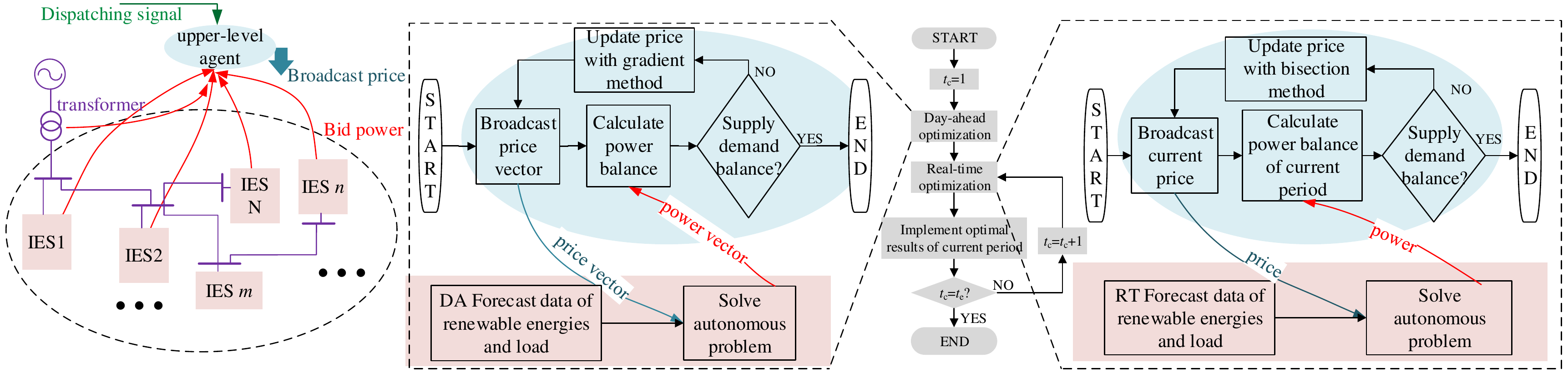}
    \caption{Operation framework of day-ahead and real-time optimization}
    \label{fig_framework}
  \end{figure*}
  
  \subsection{Problem formulation and decomposition}
  The structure of IEH is shown in Fig. \ref{fig_framework}(a). During each period, upper-level \uplevel agent aims at minimizing its overall costs across the remaining periods while balancing the supply and demand, and limiting the transformer capacity. The optimization problem can be modeled as follows:
  \begin{align}
    \label{eqn_col_optimization}
    \begin{split}
      &\;\min \sum_{n=1}^{N}\sum_{t=\tc}^{\te}F_{t,n}\\
      \mathrm{s.t.}&{\sum_{n=1}^{N}P_{\mathrm{e},t,n}}=P_{\mathrm{e},t}^{\rm Tr},\forall t\\
      &-P_{\mathrm{e},t}^{\rm Tr,out,max}\leq P_{\mathrm{e},t}^{\rm Tr} \leq P_{\mathrm{e},t}^{\rm Tr, in,max},\forall t\\
      &\text{
      \eqref{eqn_IESmodel}$_n$
      \eqref{eqn_shiftable_load}$_n$
      \eqref{eqn_curtailment}$_n$,
      constraints in \cite{8245724}}, \forall n
    \end{split}
    \tag{P3}
  \end{align}
  where $n$ is the EH index. $P_{\mathrm{e},t}^{\rm Tr}$ is the electricity that transformer imports from the main grid. $P_{\mathrm{e},t}^{\rm Tr, in,max},P_{\mathrm{e},t}^{\rm Tr,out,max}> 0$ are the maximum power exchanged with the main grid.
  % , determined by infrastructure limits of the transformer and dispatching signal received from main grid operator such as peak-shaving request.
  
  The upper level problem\eqref{eqn_col_optimization} should have been solved in an absolute centralized manner after gathering all EHs' detailed information. However, to preserve information privacy, this paper advocates to solve it in a distributed way by employing Lagrange dual decomposition method and transactive control. 
  The Lagrangian relaxed dual problem is: 
  \begin{align}
    \label{eqn_dual_problem}
    \begin{split}
    &\;\;\;\;\max_{\forall t,\lambda _t} \varphi (\lambda_t)= \max \mathrm{inf} L\\
    &\mathrm{s.t.} -P_{\mathrm{e},t}^{\rm Tr,out,max}\leq P_{\mathrm{e},t}^{\rm Tr} \leq P_{\mathrm{e},t}^{\rm Tr, in,max},\forall t\\
    &\text{
      \eqref{eqn_IESmodel}$_n$
      \eqref{eqn_shiftable_load}$_n$
      \eqref{eqn_curtailment}$_n$,
      constraints in \cite{8245724}}, \forall n
    \end{split}
    \tag{P4}
  \end{align}
  where $L$ is the Lagrangian relaxation function after introducing the Lagrange multipliers $\lambda_{\tc},\lambda_{t_{\rm c}+1},...,\lambda_{\te}$ associated with power balancing constraints:
  \begin{equation}
    \label{eqn_Lagrange_function}
    L={\sum_{n=1}^{N}\sum_{t=\tc}^{\te}F_{t,n}}
    +\sum_{t=\tc}^{\te}\lambda _t{(\sum_{n=1}^{N}P_{\mathrm{e},t,n}}-P{_{\mathrm{e},t}^{\rm Tr}})
  \end{equation}
  
  Since the primal problem \eqref{eqn_col_optimization} is linear, strong duality theorem holds and the optimal value of \eqref{eqn_dual_problem} is equivalent to \eqref{eqn_col_optimization}.
  
  For $\forall t$, define local electricity price $\lambda _{\mathrm{e},t}=\mu _{\mathrm{e},t} + \lambda _t$,
  then problem \eqref{eqn_dual_problem} is decomposed into one master problem where the \uplevel agent adjusts local price vector 
  $\Lambda_{\tc}=\left\{\lambda_{\mathrm{e},\tc},\lambda_{\mathrm{e},t_{\rm c}+1},...,\lambda_{\mathrm{e},\te}\right\}$ to strike a general supply and demand balance, in addition to $N+1$ subproblems where each EH minimizes its cost under the local electricity price $\Lambda_{\tc}$ and the transformer itself maximizes its profits through exchanging electricity power with the main gird.
  
  \subsection{Two-stage procedure}
  A market is established in the upper level where each EH participates as independent energy demander and the transformer participates as the supplier. This paper then proposes a two-stage procedure to obtain optimal result as well as to meet the real-time requirement.

  \subsubsection{Day-ahead (DA) optimization}
  % Since the total time is unlimited, 
  The process is illustrated in Fig. \ref{fig_framework}(b). Gradient method is adopted during day-ahead stage to solve the master problem iteratively.
  
  Assume that the forecasted local price vector of $k$-th iteration is:
  \begin{equation}
    \label{eqn_forecast_price}
    \hat{\vec\Lambda} ^k = \left\{
      \hat\lambda _{\rm e,1}^k,
      \hat\lambda _{\rm e,2}^k,
      \dots,
      \hat\lambda _{\mathrm{e},\tc}^k,
      \dots,
      \hat\lambda _{\mathrm{e},\te}^k
      \right\}
  \end{equation}
  where $\hat\lambda _{\mathrm{e},t}^k$ denotes the forecasted local price at time $t$ after $k$-th iteration.
  % Note that the original vecotr $\hat{\vec\Lambda} ^{0}$ can be estimated from historical records.
  
  At each iteration, each EH solves subproblem according to the broadcasted price $\hat{\vec\Lambda} ^k$ and bids the optimal power vector $\vec{P}_{\mathrm{e},n}^{*}$ to the market. After receiving all bidding data, the \uplevel agent obtains optimal transformer power vector $\vec{P}_{\mathrm{e}}^{\rm Tr*}$ and computes the balance vector $\Delta \vec{P} = -\vec{P}_{\mathrm{e}}^{\rm Tr*} + \sum_{1}^{N}{\vec{P}_{\mathrm{e},n}^{*}}$. The \uplevel agent then updates and broadcasts the price vector $\hat{\vec\Lambda} ^{k+1} = \hat{\vec\Lambda} ^k+\eta^k\Delta \vec{P}$ where $\eta^k$ denotes a feasible step length. These steps are iteratively repeated until supply and demand balance is achieved.
  \subsubsection{Real-time (RT) optimization}
  The process is illustrated in Fig. \ref{fig_framework}(c). During every control period, the \uplevel agent endeavors to eliminate the impacts of multiple uncertainties through rolling horizon optimization. Besides, at time $\tc$, local electricity prices at time {$t_{\rm c}+1,t_{\rm c}+2,\dots,\te$} are assumed to be equal to the DA forecasted prices. 
  % Under this assumption, the dimensionality of dual variables as well as computational complexity are dramatically reduced for real-time scheduling.
  Let $\lambda\rm _e^{max}$ and $\lambda\rm _e^{min}$ denote the upper and lower bound of local electricity price, and the processes of real-time optimization are described as follows:
  \\S0: The \uplevel agent broadcasts the DA forecast price vector:
  $\hat{\vec\Lambda} = \left\{
    \hat\lambda _{\rm e,1},
    \hat\lambda _{\rm e,2},
    \dots,
    \hat\lambda _{\mathrm{e},\tc},
    \dots,
    \hat\lambda _{\mathrm{e},\te}
    \right\}$;
  \\S1: The \uplevel agent broadcasts price $\lambda_{\mathrm{e},\tc}^p$ of $\tc$, where $p$ is real-time iteration index. ($\lambda_{\mathrm{e},\tc}^0 = \lambda\rm _e^{max}$ and $\lambda_{\mathrm{e},\tc}^1 = \lambda\rm _e^{min}$).
  \\S2: Each EH generates the $p$-th price vector:
  \begin{equation}
    \label{eqn_RT_price}
    \vec\Lambda_{\tc}^p = \left\{
      \lambda_{\mathrm{e},\tc}^p,
      \overbrace{\hat\lambda _{\mathrm{e},t_{\rm c}+1},
      \dots,
      \hat\lambda _{\mathrm{e},\te}}^\text{
        day-ahead forecast price
      }
      \right\}
  \end{equation}
  
  The EH then solves the autonomous subproblems and bids $P_{\mathrm{e},\tc,n}^{p*}$, optimal power of current period, to the market.
  \\S3: The \uplevel agent obtains $P_{\mathrm{e},\tc}^{\mathrm{Tr},p*}$, the optimal transformer power of current period, and calculates the overall power balance:
  \begin{equation}
    \label{eqn_balance}
    \Delta \vec{P}_{\mathrm{e},\tc}^p =
     -P_{\mathrm{e},\tc}^{\mathrm{Tr},p*}
     +\sum_{n=1}^{N}{P_{\mathrm{e},\tc,n}^{p*}}
  \end{equation}
  \\S4: If power balance $\Delta \vec{P}_{\mathrm{e},\tc}^p$ equals zero, then set clearing price $\lambda_{\mathrm{e},\tc}^*$ to be $\lambda_{\mathrm{e},\tc}^p$ and step into S5. Else, the \uplevel agent uses bisection method to update price according to $\Delta \vec{P}_{\mathrm{e},\tc}^p$, and step back to S1.
  \\S5: Each EH implements optimal result and moves into period $t_{\rm c}+1$.
  
  It should be noted that both DA and RT stage require multiple iterations to solve the optimization master problem. Since relatively accurate price vector has been forecasted in DA stage, the RT schedule transforms high-dimensional local prices to single-dimensional ones, thus making it feasible to update $\lambda_{\mathrm{e},\tc}^{p+1}$ by the bisection method, which outperforms gradient method with respect to convergence speed.
  
  % Assume the allowable error to be $\epsilon$, the detailed step of S4 is: 
  % \\S4-1: If $p=0$, initialize upper boundaries $\overline\lambda_{\mathrm{e},\tc} = \lambda_{\mathrm{e},\tc}^0$, $\Delta\overline P_{\mathrm{e},\tc}=\Delta P_{\mathrm{e},\tc}^0$, update $p$ to be 1 and step back into S1; else if $p=1$, initialize lower boundaries $\underline\lambda_{\mathrm{e},\tc}= \lambda_{\mathrm{e},\tc}^1$, $\Delta\underline P_{\mathrm{e},\tc}=\Delta P{_{\mathrm{e},\tc}^1}$.
  % \\S4-2: if $\Delta\overline P_{\mathrm{e},\tc}\Delta\underline P_{\mathrm{e},\tc} < 0$, update upper boundaries $\overline\lambda_{\mathrm{e},\tc} = \lambda{_{\mathrm{e},\tc}^p}$, $\Delta\overline P_{\mathrm{e},\tc}=\Delta P{_{\mathrm{e},\tc}^p}$; else update lower boundaries $\underline\lambda_{\mathrm{e},\tc}= \lambda_{\mathrm{e},\tc}^p$, $\Delta\underline P_{\mathrm{e},\tc}=\Delta P_{\mathrm{e},\tc}^p$.
  % \\S4-3: If
  % $\overline\lambda_{\mathrm{e},\tc} - \underline\lambda_{\mathrm{e},\tc} < \epsilon $, set clearing price $\lambda{_{\mathrm{e},\tc}^*}$ to be $\lambda_{\mathrm{e},\tc}^p$ and step into S5. Else, calculates $\lambda_{\mathrm{e},\tc}^{p+1}=\left( \overline\lambda_{\mathrm{e},\tc} + \underline\lambda_{\mathrm{e},\tc} \right)/2$ and step back to S1.
  \section{Case Study}
  \subsection{Simulation Setup}
  An IEH consisting of 15 EHs is illustrated in the simulation case. Parameters of EHs, together with load and renewable energies data are listed in \cite{8245724}.
  % The maximum load of each IES and the installed capacity of renewable energies are listed in Table B. 
  The interval period is 1h, maximum and minimum electricity price in the market is 1.5 and 0 yuan/kWh, respectively. The main grid follows real-time utility electricity prices. The price of natural gas is 3.3 yuan/m$^3$. Its density is 0.79kg/m$^3$, and the calorific value is 45MJ/kg. The day-ahead, intra-day, and real-time forecast errors of renewable energies are $\pm 30\%$, $\pm 10\%$, and $\pm 5\%$, respectively; the day-ahead, intra-day, and real-time forecast errors of load are $\pm 20\%$, $\pm8\%$, and $\pm 3\%$, respectively\cite{20171225002}.
  \subsection{Results and Discussions}
  To verify the effectiveness of the proposed method, the upper-level problem is solved by the centralized rolling optimization method and the proposed scheme, separately. The total cost of IEH using method proposed in this paper is 247,185 yuan, while the centralized result is 247,173 yuan. Energy costs of each EH are listed in Table \ref{table_costs}. 
  Result of the propsoed method is verified to be very close to that of the centralized optimization method. Therefore, the method proposed in this paper is efficient in obtaining a rather optimal solution of the collaborative optimization in a distributed manner.

  \begin{table}[]
    \caption{Costs of 15 EHs under centralized(C) method and distributed(D) method proposed in this paper (yuan)}
    \label{table_costs}
    \begin{tabular}{@{}ccc|ccc|ccc@{}}
    \toprule
    No. & C\tnote{*}   & D\tnote{**}    & No. & C     & D     & No. & C     & D     \\ \midrule
    1   & 18504 & 18492 & 6   & 17982  & 18038  & 11  & 24717  & 24743  \\
    2   & 17064 & 17067 & 7   & 4860  & 4888  & 12  & 15425 & 15446 \\
    3   & 22886  & 22862  & 8   & 17499 & 17518 & 13  & 22858 & 22817 \\
    4   & 21209 & 21189 & 9   & 12660 & 12611 & 14  & 26982 & 26986 \\
    5   & 7928  & 7928  & 10  & 11766  & 11766  & 15  & 4833   & 4833   \\ \bottomrule
    \end{tabular}
  \end{table}

  Fig. \ref{fig_transformer} compares the main transformer power when using the two different schemes, the main grid real-time electricity price, the DA forecast price and the RT clearing electricity price curve. It can be seen that during 7:00-23:00, the real-time utility price is relatively high. The main transformer is not congested, and its power is basically the same in the two cases. On the contrary, the price is relatively low during 24:00-6:00. Congestion occurs, but both schemes ensure that the main transformer power is limited within the maximum capacity. Furthermore, it can also be seen from Fig. \ref{fig_transformer} that the clearing electricity price is higher than the forecast electricity price at 24:00. It is because that remaining unsatisfied shiftable loads are forced to be satisfied during the last scheduling period, thus lowering the EH's control flexibility. Besides, due to the existence of renewable energies and loads uncertainties, transformer power differences between these two schemes have been observed during some time periods such as 7:00 and 19:00.
  \begin{figure}
    \centering
    \includegraphics[width=0.48\textwidth]{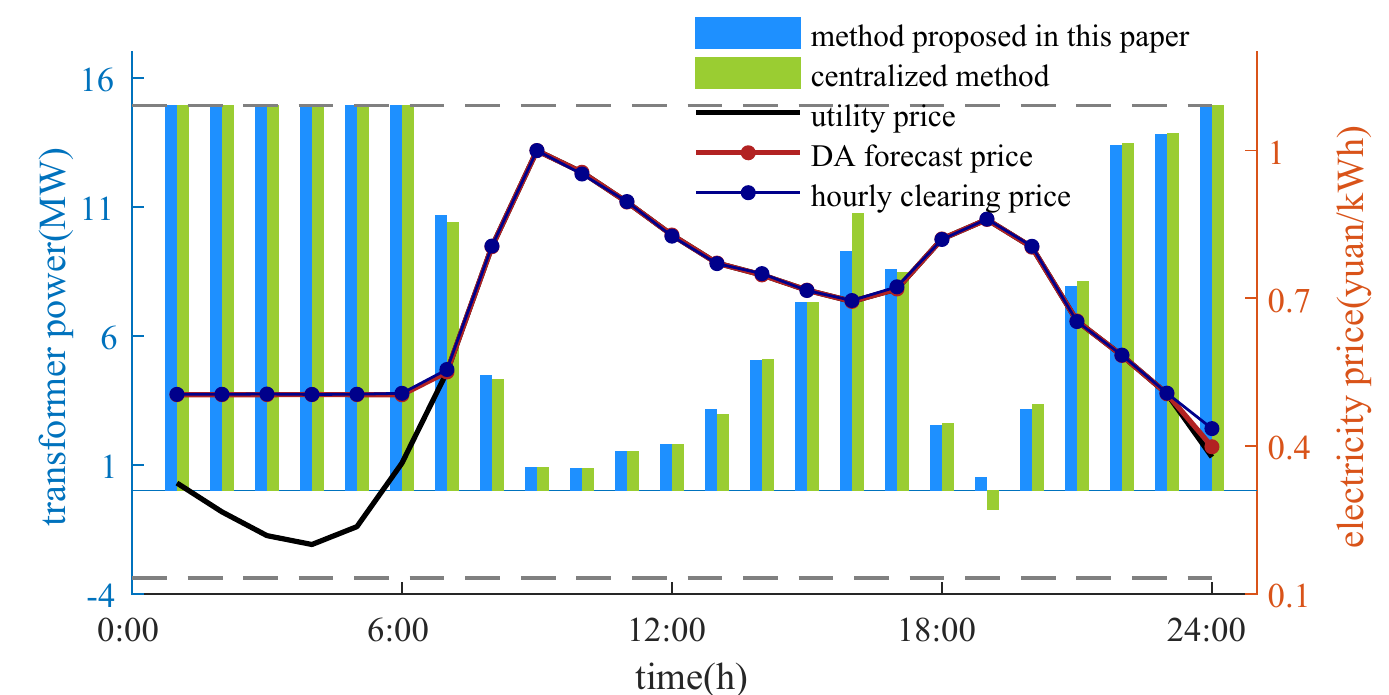}
    \caption{Transformer power of two methods and price curves }
    \label{fig_transformer}
    %The \caption tag must come before the \label tag. 
  \end{figure}

  Simulation results of electric power of one EH is given in Fig. \ref{fig_EES}. As expected, conditions where \eqref{eqn_condition_constraint} is unsatisfied have not been witnessed in the simulation case. 
  \begin{figure}
    \centering
    \includegraphics[width=0.48\textwidth]{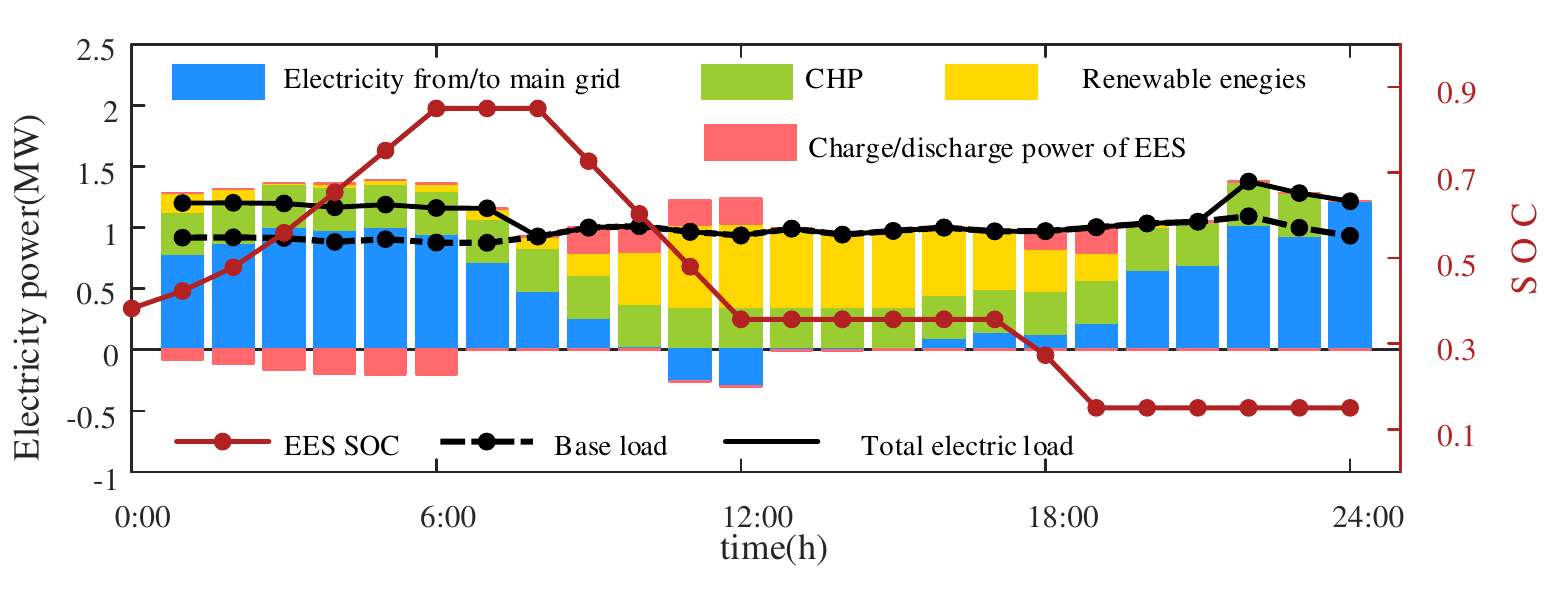}
    \caption{Simulation results of one EH's electric power}
    \label{fig_EES}
  \end{figure}

  This paper further simulates the efficiency of the proposed method in coordinating IEH of different scales. Results are shown in Table \ref{table_iteration}. The results show that by applying proposed method, the number of iterations during RT stage are greatly reduced compared with DA iteration. In addition, the number of iterations before and during the day will not increase significantly with the growth of EH numbers. These ensure that the computation complexity and iteration speed can meet the efficiency requirement for real-time control within the day, and system’s scalability is also maintained.
  
  \begin{table}[]
    \caption{Iterations of day-ahead and real-time optimization}
    \label{table_iteration}
    \centering
    \begin{tabular}{c|cccc}
      \toprule
     EH number & 10 & 20 & 50 & 100\\
     \toprule
     Day-ahead iteration & 140 & 117 & 78 & 92\\
     Real-time iteration & 9 & 9 & 9 & 9\\
     \bottomrule
    \end{tabular}
  \end{table}
  
  \section{Conclusion}
  Considering issues associated with information privacy and operation authority among different management sectors, this paper introduces TC method in managing IEH. Specially, a bi-level two-stage optimization framework is established, where each EH performs autonomous optimization in the lower level, and TC method is applied to realize collaborative energy management for IEH in the upper level. 

  In the autonomous optimization, this paper proposes an equivalent method of energy storage and provides its mathematical proof, in order to relax the nonlinear equality constraint and model the problem in a convex way. In the collaborative optimization, a two-stage procedure is designed to simplify the dual variables in the real-time stage, thus enabling the later utilization of a bisection method to solve the clearing price. Since bisection method excels in convergence performance, the optimization method can meet the real-time control requirements. The simulation case reaffirms that precise prices forecasted in the day-ahead stage can help the upper level to obtain a rather close result to the centralized method.
  % use section* for acknowledgment
  % \section*{Acknowledgment}
  % The authors would like to thank...

  % Can use something like this to put references on a page
  % by themselves when using endfloat and the captionsoff option.
  % \ifCLASSOPTIONcaptionsoff
  %   \newpage
  % \fi
 
  % \begin{thebibliography}{1}

  \bibliographystyle{IEEEtran}
  \bibliography{references}
  % \begin{IEEEbiography}{Michael Shell}
  % Biography text here.
  % \end{IEEEbiography}
  
  % \begin{IEEEbiographynophoto}{John Doe}
  % Biography text here.
  % \end{IEEEbiographynophoto}
  
  % \begin{IEEEbiographynophoto}{Jane Doe}
  % Biography text here.
  % \end{IEEEbiographynophoto}
  \end{document}